\newcommand{\widesim}[2][1.5]{
  \mathrel{\overset{#2}{\scalebox{#1}[1]{$\sim$}}}
}
\numberwithin{equation}{section}
\newtheorem{lemma}{Lemma}
\newtheorem{proposition}{Proposition}
\newcommand\blfootnote[1]{%
  \begingroup
  \renewcommand\thefootnote{}\footnote{#1}%
  \addtocounter{footnote}{-1}%
  \endgroup
}
\DeclareMathOperator{\Var}{Var}
\DeclareMathOperator{\LCS}{LCS}
\title{Simulations, Computations, and Statistics for Longest Common Subsequences}
\author{Qingqing Liu\thanks{School of Mathematics,
Georgia Institute of Technology,
686 Cherry Street,
Atlanta, GA 30332-0160 USA, qqliu@gatech.edu} \and Christian Houdr\'e\thanks{School of Mathematics,
Georgia Institute of Technology,
686 Cherry Street,
Atlanta, GA 30332-0160 USA, houdre@math.gatech.edu. Research supported in part by the grant $\# 246283$ from the Simons Foundation.}}
\begin{document}
\maketitle
\blfootnote{Keywords: Longest Common Subsequences, Monte-Carlo Simulation, Hypothesis Testing, Longest Common Increasing Subsequences}
\blfootnote{MSC 2010: 65C05, 62F03, 60C05, 05A05}
\begin{abstract}
The length of the longest common subsequences (LCSs) is often used as a similarity measurement to compare two (or more) random words.
Below we study its statistical behavior in mean and variance using a Monte-Carlo approach from which we then develop a hypothesis testing method for sequences similarity. 
Finally, theoretical upper bounds are obtained for the 
Chv\'atal–Sankoff constant of multiple sequences. 
\end{abstract}

\section{Introduction}
The study of sequences alignments and  comparisons is an important problem in
bioinformatics and computer science, where a fundamental issue 
is  to compare two or more sequences  and to assess the significance of their similarity or dissimilarity. Within this framework, a general methodology is first to find an optimal alignment of
the sequences and then to compute its score. Afterwards,
some knowledge of the statistics of the alignment score allows to test  hypotheses to tell whether or not the similarity is
significant.

To formalize our discussion, let us introduce our framework.  Following \cite{clote_computational_2000}, 
let $\mathcal{A}$ be a finite alphabet and let
$\texttt{-}\not\in \mathcal{A}$ represent a gap symbol. Let $\Sigma$ to be the set of non-empty
sequences of $\mathcal{A}$, i.e., \ $\Sigma =
\bigcup_{n\ge 0}\mathcal{A}^n$,
where $\mathcal{A}^{0} = \emptyset$ is the empty string. A sequence
$\bm{x}\in\mathcal{A}^n$ has length $n$, denoted by $|\bm{x}|=n$.
Given two sequences
$\bm{a}=(a_1,\ldots,a_n)$ and $\bm{b}=(b_1,\ldots, b_m) \in \Sigma$, we
say that a pair of sequence $\bm{a}^\diamond,\bm{b}^\diamond\in 
\cup_{n\ge 0} (\mathcal{A}\cup\{\texttt{-}\})^n$ is an alignment of
$(\bm{a},\bm{b})$, if the following three conditions are satisfied: (i) $|\bm{a}^\diamond|=|\bm{b}^\diamond|$, (ii)
$a_i^\diamond\neq \texttt{-}$ or $b_i^\diamond\neq \texttt{-}$ 
for $i=1,\ldots,|\bm{a}^\diamond|$, i.e., no two gaps are aligned, and (iii)
$\bm{a}^\diamond|_\mathcal{A}=\bm{a}$ and $\bm{b}^\diamond|_\mathcal{A}=\bm{b}$, i.e., the restrictions of $\bm{a}^\diamond$ and $\bm{b}^\diamond$ to symbols in $\mathcal{A}$ give respectively $\bm{a}$ and $\bm{b}$.

To measure the similarity of two sequences, assign a score to each
alignment and take the score of the best alignment (i.e., with the highest score) as the similarity score of the two sequences. To define an alignment score, we need a score function 
$s: \mathcal{A}\times\mathcal{A}\to\mathbb{R}$, and a gap penalty function 
$g: \mathbb{N}\to\mathbb{R}$ which is assumed to be subadditive, i.e., 
\begin{equation*}
  \forall \; k,l:\; g(k+l)\le g(k)+g(l).
\end{equation*}

Given a sequence $\bm{u}\in \cup_{n\ge 0} (\mathcal{A}\cup\{\texttt{-}\})^n$, we say that $\bm{u}$
contains a gap of length $k$ at position $i$ 
if $(u_i,\ldots,u_{i+k-1})\in  \cup_{n \ge 0} \{\texttt{-}\}^n $, and there is no other
subsequence of $\bm{u}$ extending   $(u_i,\ldots,u_{i+k-1})$ that is
composed uniquely of \texttt{-}'s. 
Then still following \cite{clote_computational_2000}, $\Delta_k(\bm{u})$ is defined to be the
number of different gaps of $\bm{u}$ having length
$k$, and the score of the alignment
is defined as
\begin{equation}
\label{eq:score}
  s(\bm{a}^\diamond,\bm{b}^\diamond)=
  \sum_{\substack{1\le i \le |\bm{a}^\diamond|\\a_i^\diamond\neq%
      \texttt{-},b_i^\diamond\neq \texttt{-}}} 
  s(a_i^\diamond,b_i^\diamond) 
  - \sum_{1\le k \le |\bm{a}^\diamond|}
  \Delta_k(\bm{a}^\diamond)g(k)
  - \sum_{1\le k \le |\bm{a}^\diamond|}
  \Delta_k(\bm{b}^\diamond)g(k).
\end{equation}

Two types of alignments are commonly used in sequences comparisons, global and local alignment. While a local alignment looks for the segments with best matching scores, the global alignment score corresponds to having as many letter matched as possible in each sequence. 

Although the statistics (mean, variance,
distribution, etc.) of local alignment scores are
well studied \cite{altschul_local_1996,chao_sequence_2009}, there is
still much unknown about the statistics of global alignment
scores. One of the most analyzed global alignment statistics is the length
of the longest common subsequences (LCSs), which is the score of the optimal alignment using the score function
\begin{equation*}
  s(a,b)=
  \begin{cases}
    1,&a=b\\
    0,&a\neq b,
  \end{cases}
  \qquad
\end{equation*}
and a zero gap function, i.e., $ g(k)=0$ for all $k \in \mathbb{N}$. 
Next, given two strings $\bm{a}=(a_1,\ldots,a_n)$ and $\bm{b}=(b_1,\ldots,b_m)$, a sequence $\bm{c}=(c_1,\ldots,c_l)$ is called a common subsequence of $\bm{a}$ and $\bm{b}$ if there exist indices $1\le i_1<i_2<\cdots <i_l\le n$ and $1\le j_1<j_2<\cdots < j_l \le m$ such that $c_k=a_{i_k}=b_{j_k}$ for $k=1,\ldots,l$. Then, the length of the LCS of $\bm{a}$ and $\bm{b}$ is $LCS(\bm{a},\bm{b})=\max\{|\bm{c}|:\,\bm{c} \text{ is a common subsequence of $\bm{a}$ and $\bm{b}$}\}$, and we also use LCS to also represent the length of the common subsequences.
This definition can be naturally extended to the case of three or more sequences, and when the sequences have same length $n$, we denote it by $LC_n$. In the present text, we will only consider the LCSs of sequences of the same length unless otherwise specified.

As far as this paper's content is concerned, we start by summarizing previous studies on the mean behavior of LCS, some notable LCS algorithms and previous work on Monte-Carlo simulation of LCSs.  
We then estimate the variance of the length of LCS of two binary random words using Monte-Carlo experiments (Section~\ref{sec:var}).
Based on these results, and on some recent advances on its limiting distribution \cite{houdre_central_2017}, we build a
hypothesis testing method to test whether two sequences are
significantly similar or not (Section~\ref{sec:lcs-prob}) and conduct extensive Monte-Carlo experiments to determine the parameters of the test (Section~\ref{sec:lcs-alg} to \ref{sec:lcs-expr}). 
Finally, we extend a classical result of \cite{chvatal_upper-bound_1983} valid for two sequences to an arbitrary finite number of sequences (Section~\ref{sec:upper}) and thus obtain new theoretical upper bounds on the Chv\'atal and Sankoff constant in that context.







\section{Summary of Previous Work}

\subsection{Theoretical Study}
\label{sec:rel-theory}
The earliest result on the expected length
of LCS is due to Chv\'atal and Sankoff \cite{chvatal_longest_1975}, who proved that the limit
\begin{equation*}
  \gamma_k^*=\lim_{n\to\infty}\frac{\mathbb{E}LC_n}{n}, 
\end{equation*}
exists, where $k$ is the alphabet size, and
the expectation is taken assuming the sequences are
i.i.d. generated, and are also independent of each other.
For uniform binary draws, \cite{chvatal_longest_1975} give bounds for $\gamma_2^*$: $0.727273\le \gamma_2^*\le
0.905118$,
This was followed by many attempts at improving the
bounds---\cite{deken_limit_1979,chvatal_upper-bound_1983,deken_probabilistic_1983,dancik_expected_1994,lueker_improved_2009}, which are summarized in Table~\ref{tab:bounds-gamma-2}. 
\begin{table}[!htp]
\centering
\caption{Theoretical Bounds for $\gamma_2^*$}
\label{tab:bounds-gamma-2}
\begin{tabular}{ l l l}
\toprule
  & lower bound & upper bound \\
\midrule
Chv\'atal and Sankoff&\multirow{2}{*}{0.727273} &\multirow{2}{*}{0.86660}\\
{}\cite{chvatal_longest_1975,chvatal_upper-bound_1983}&&\\
Deken \cite{deken_limit_1979,deken_probabilistic_1983} &0.7615 &0.8575 \\
Dan\v{c}\'ik \cite{dancik_expected_1994} &0.773911 &0.837623\\
Lueker \cite{lueker_improved_2009} &0.788071 & 0.826280\\
\bottomrule 
\end{tabular}

\end{table}

Precise estimates on $\gamma_k^*$, for $2\le k \le 15$, have  been obtained using Monte-Carlo simulations in \cite{boutet_de_monvel_extensive_1999}, and \cite{bundschuh_high_2001} further improves the estimation precision for $k=2,4,8,16$ using a different Monte-Carlo approach. A conjecture on the growth of $\gamma_k^*$, put forward in \cite{sankoff_common_1983}, was positively answered in \cite{kiwi_expected_2005} who showed that $$\lim_{k \to \infty} \sqrt{k}\gamma_k^* = 2.$$

\subsection{Algorithms for LCSs}
\label{sec:rel-alg}
Algorithms to find the best alignments (the ones having the
maximal score) have also been well studied. Since
\cite{needleman_general_1970} developed a dynamic programming algorithm for
global alignment, many improvements or variants have been
developed---\cite{hirschberg_linear_1975} for a linear space
improvement, \cite{smith_identification_1981} for local alignment, 
\cite{gotoh_improved_1982} for affine gap penalty,
\cite{lipman_rapid_1985,altschul_basic_1990,kent_blatblast-like_2002}
for fast heuristic local alignment, and many more.
A detailed review of LCSs algorithms can be found in \cite{bares_algorithms_2009}.

\section{Monte-Carlo Simulation for the Variance
\protect\footnote{For all the simulations presented in this paper, the experiments were run on the Partnership for an Advanced Computing Environment (PACE).}}
\label{sec:var}


The theoretical study of the variance of the length of LCSs is less complete. A general linear upper bound has been obtained in \cite{steele_efron-stein_1986}. Lower bounds, also of linear order, have been proved in various biased instances (\cite{lember_standard_2009}, \cite{houdre_order_2016}, \cite{houdre_variance_2016}, \cite{lember_lower_2016}, \cite{gong_lower_2016}, \cite{amsalu_sparse_2016} $\cdots$). But the uniform i.i.d.\ case is still unknown.
In \cite{boutet_de_monvel_extensive_1999}, it is observed through Monte-Carlo simulation, with $n$ up to $20,000$, that the order of the variance of the length of the LCSs of binary random words is at least of order $n^{2\omega'}$, where $\omega' = 0.418 \pm 0.005$. Our simulation shows that when $n$ becomes larger, such deviation also becomes larger and the variance tends to have order $n$.

\subsection{Problem Description}
Given two sequences $\bm{X}=(X_1,\ldots,X_n)$ and
$\bm{Y}=(Y_1,\ldots,Y_n)$ having the same length, where $X_i,Y_i \in \mathcal{A}$ and where again $\mathcal{A}$ is
the alphabet,  we explore, by Monte-Carlo method, the asymptotic behavior of
$\Var LC_n$ when $n$ grows large. 

To perform Monte-Carlo simulations, we need to select an algorithm to compute the length of the LCSs. The dynamic programming algorithm is classical but not efficient enough. Since our experiments are only for $|\mathcal{A}|=2$ or $|\mathcal{A}|=4$, we choose to use the WMMM algorithm \cite{wu_onp_1990}, which is according to \cite{bares_algorithms_2009} very efficient in time and memory when $|\mathcal{A}|$ is small.

\subsection{Experiment Setting}
\begin{itemize}
\item The alphabet size is 2 ($|\mathcal{A}|=2$);
\item For each $n$ we draw 10,000 random sample for Monte-Carlo simulation.
\end{itemize}

\subsection{Experiment Results}

\subsubsection{$\mathbb{P}(X_1=0)=0.5,\; \mathbb{P}(X_1=1)=0.5$}
\label{sec:1vs1}
In this experiment, $n$ ranges from 50,000:50,000:1,000,000.
We plot $\Var LC_n$ against $n$ under a log-log scale in Figure~\ref{fig:varn}.
\begin{figure}[!htp]
  \centering
  \begin{minipage}[b]{0.47\textwidth}
    \centering
    \includegraphics[width=\textwidth]{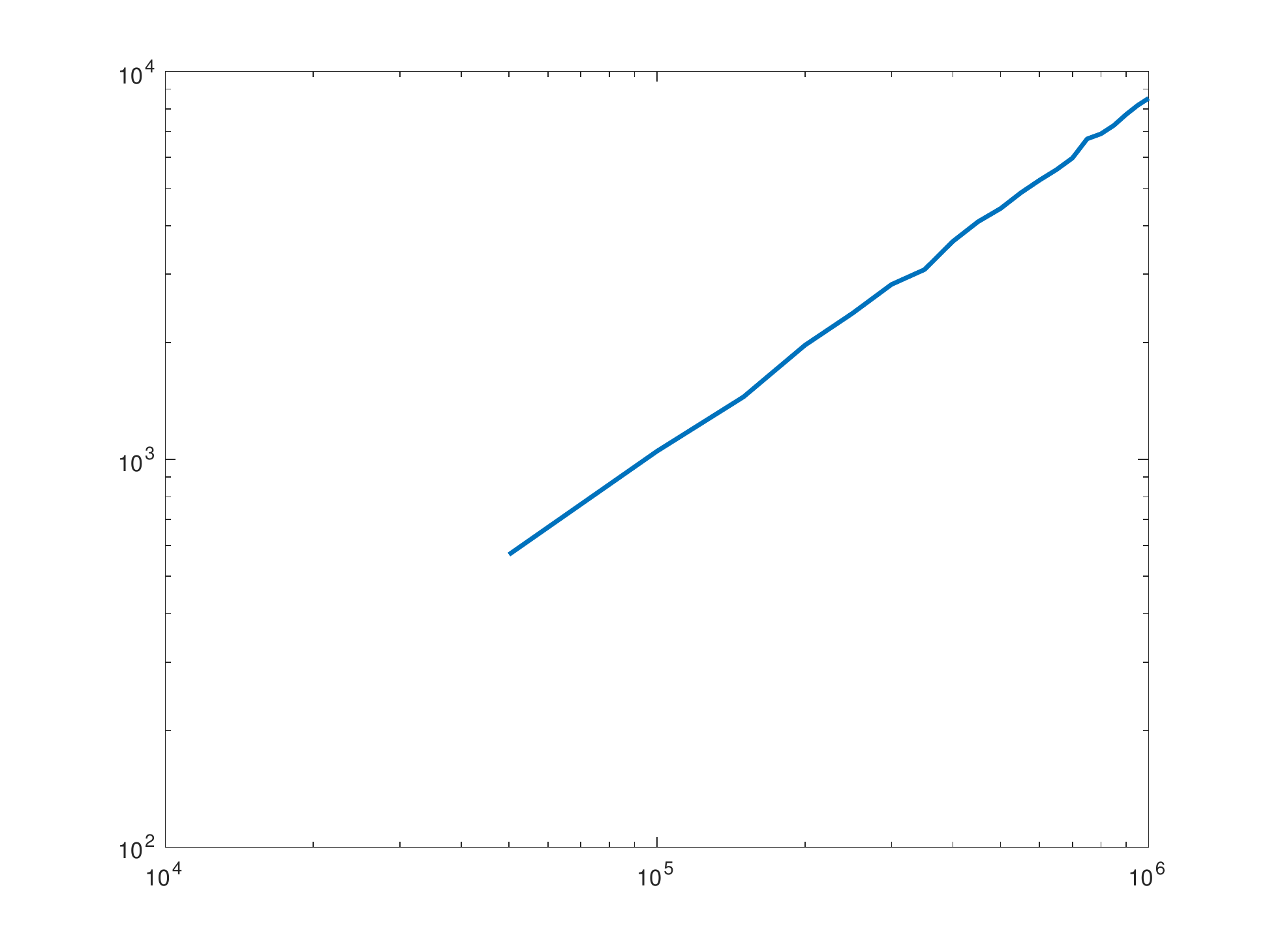}
  \end{minipage}
  \begin{minipage}[b]{0.47\textwidth}
    \centering
    \includegraphics[width=\textwidth]{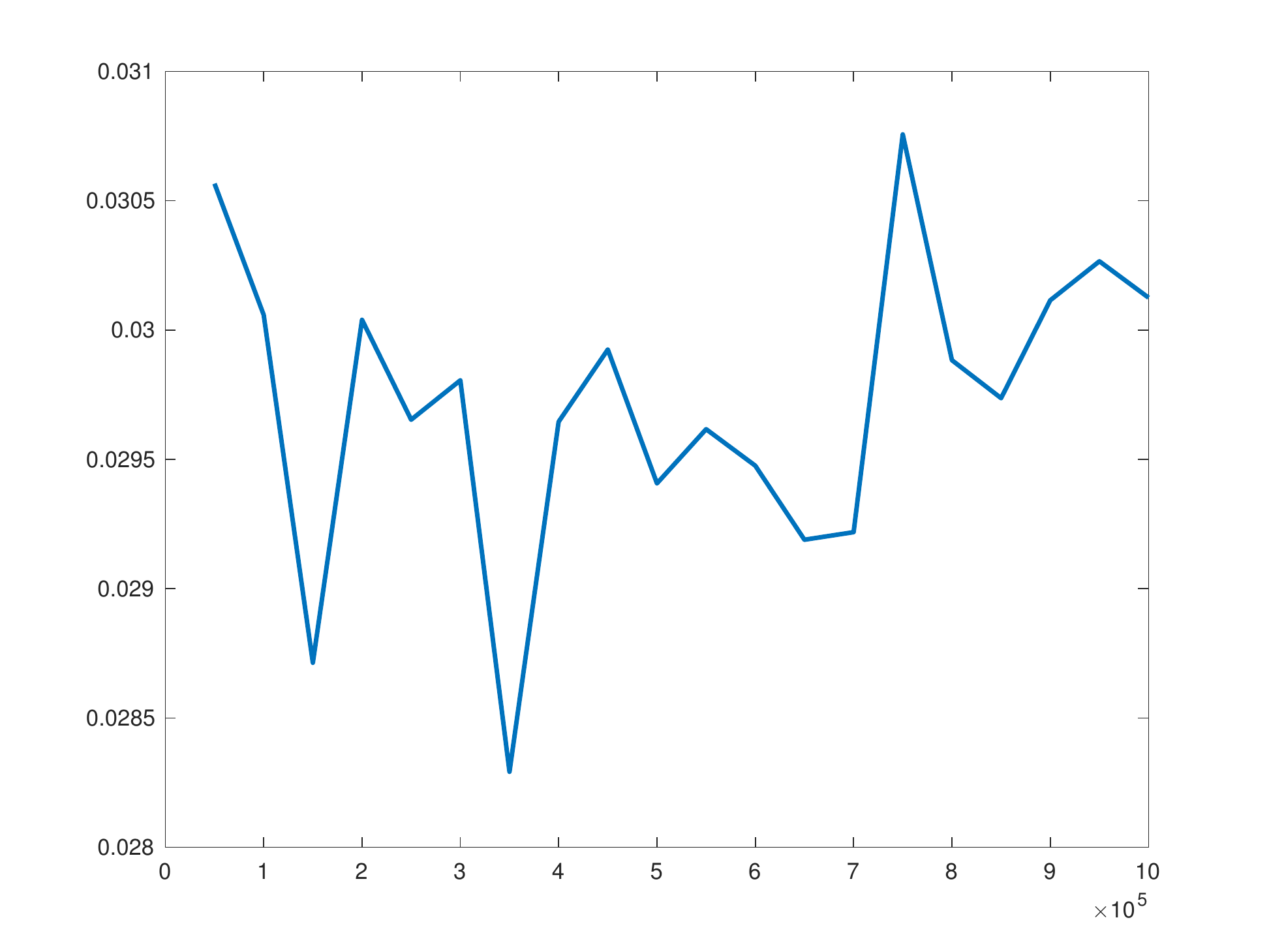}
  \end{minipage}
  \caption{\textbf{Left}: log-log plot of $\Var LC_n$ versus $n$,
    \textbf{Right}: plot of $\Var LC_n/n^{0.9086}$ versus $n$}
  \label{fig:varn}
\end{figure}
\begin{figure}[!htp]
  \centering
  \begin{minipage}[b]{0.47\textwidth}
    \centering
    \includegraphics[width=\textwidth]{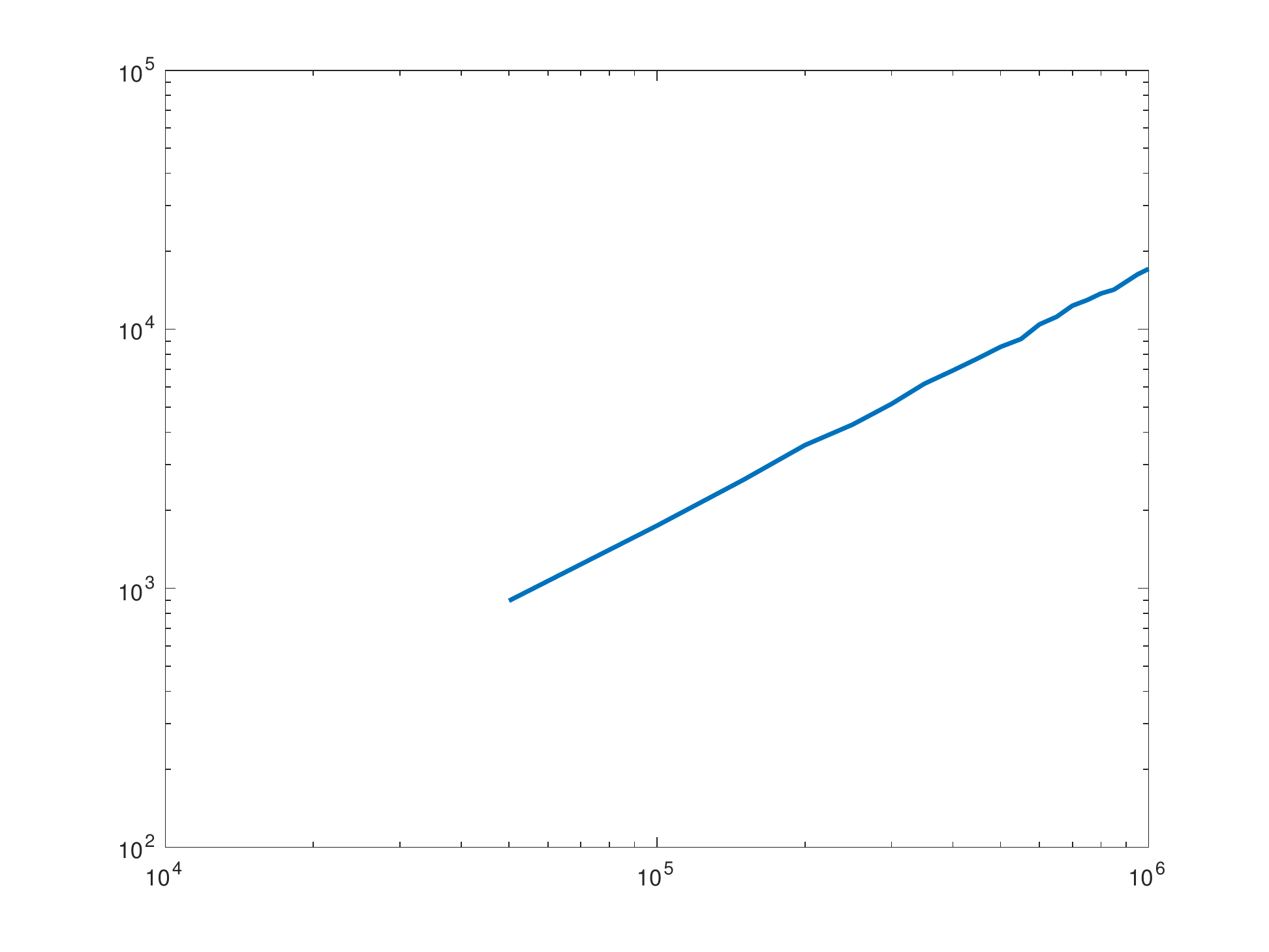}
  \end{minipage}
  \begin{minipage}[b]{0.47\textwidth}
    \centering
    \includegraphics[width=\textwidth]{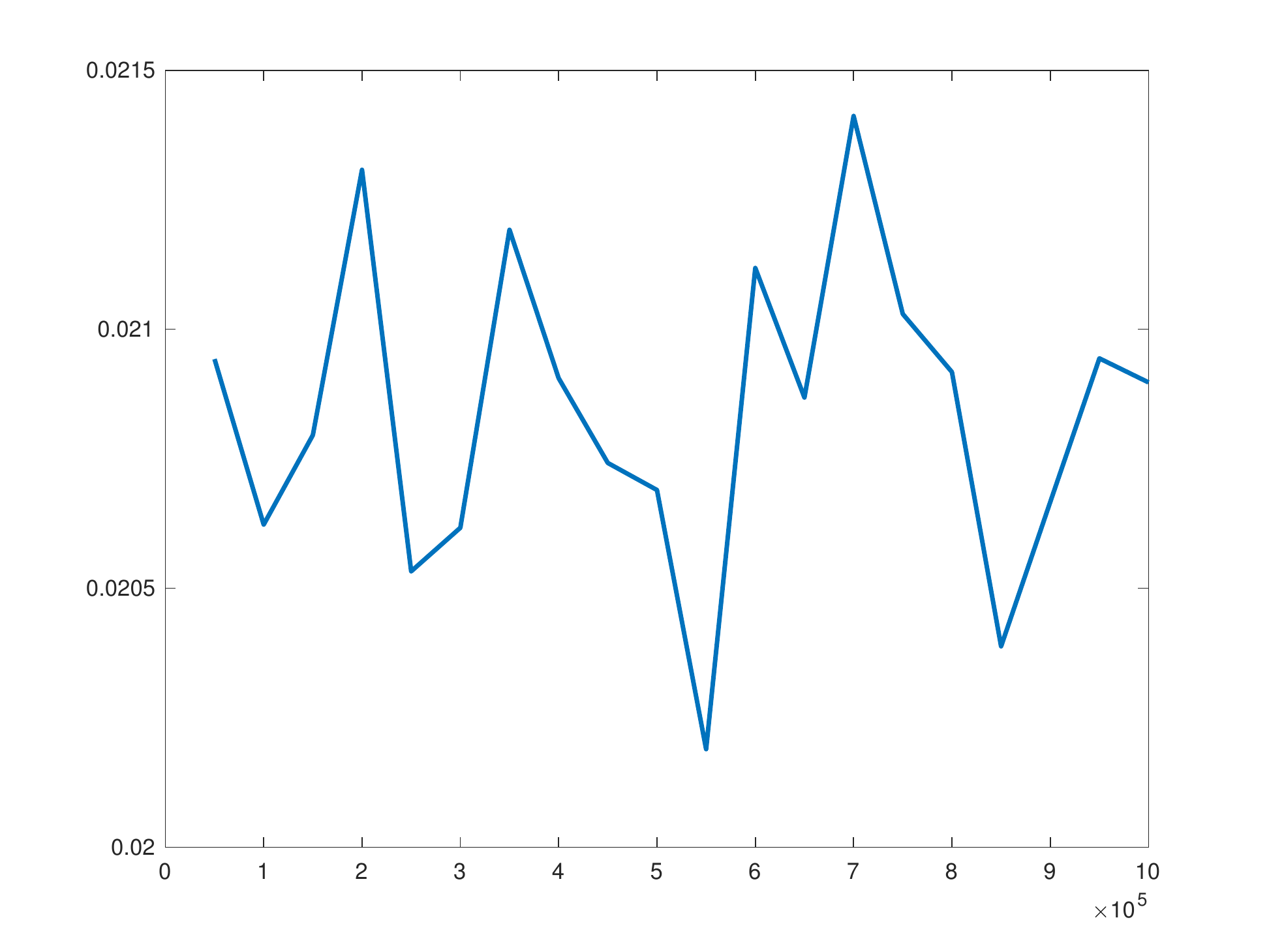}
  \end{minipage}
  \caption{\textbf{Left}: log-log plot of $\Var LC_n$ versus $n$,
    \textbf{Right}: plot of $\Var LC_n/n^{0.9855}$ versus $n$}
  \label{fig:varn_1vs9}
\end{figure}
\begin{figure}[!htp]
  \centering
  \begin{minipage}[b]{0.47\textwidth}
    \centering
    \includegraphics[width=\textwidth]{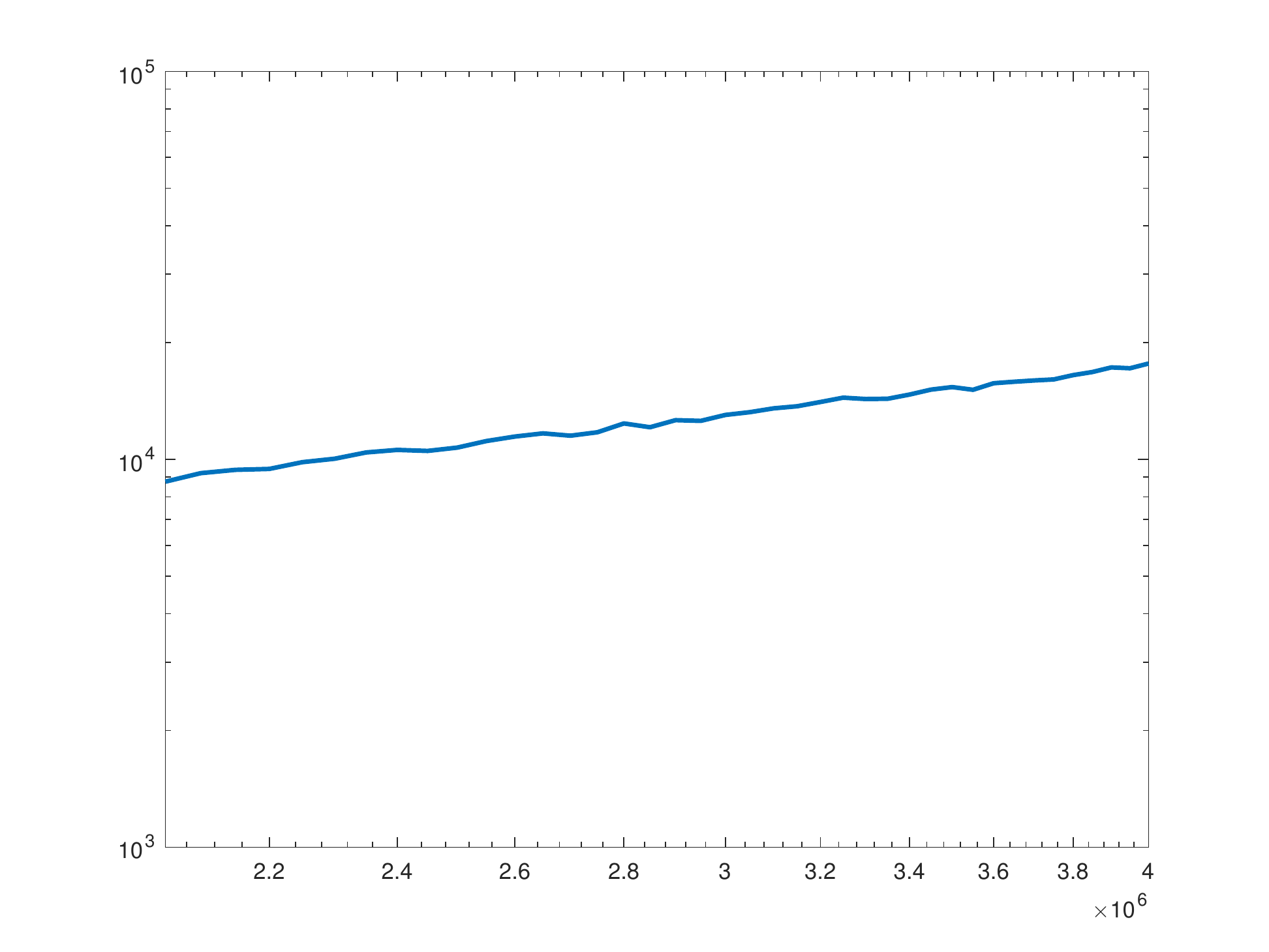}
  \end{minipage}
  \begin{minipage}[b]{0.47\textwidth}
    \centering
    \includegraphics[width=\textwidth]{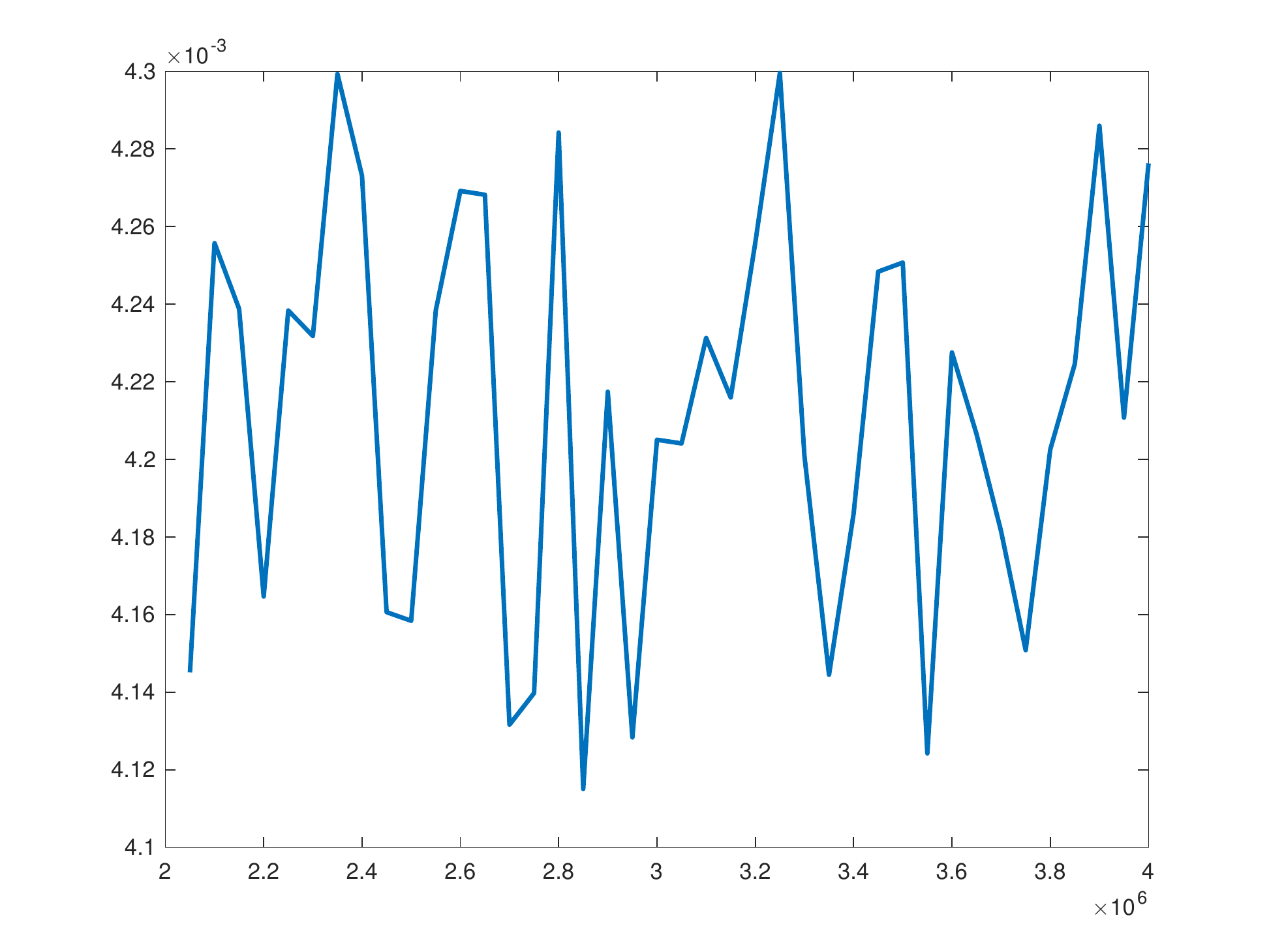}
  \end{minipage}
  \caption{\textbf{Left}: log-log plot of $\Var LC_n$ versus $n$,
    \textbf{Right}: plot of $\Var LC_n/n^{1.0021}$ versus $n$}
  \label{fig:varn_1vs99}
\end{figure}

We found the following relation between $\Var LC_n$ and $n$ using
linear regression
\begin{equation*}
  \Var LC_n\approx 0.0297n^{0.9086}.
\end{equation*}

\subsubsection{$\mathbb{P}(X_1 = 0)=0.1,\; \mathbb{P}(X_1 = 1)=0.9$}
In this experiment, $n$ ranges from 50,000:50,000:1,000,000.
We plot $\Var LC_n$ against $n$ under a log-log scale in Figure~\ref{fig:varn_1vs9}.

We found the following relation between $\Var LC_n$ and $n$ using
linear regression
\begin{equation*}
  \Var LC_n\approx 0.0208n^{0.9855}.
\end{equation*}

\subsubsection{$\mathbb{P}(X_1=0)=0.01,\; \mathbb{P}(X_1=1)=0.99$}
In this experiment, $n$ ranges from 2,050,000:50,000:4,000,000.
We plot $\Var LC_n$ against $n$ under a log-log scale in Figure~\ref{fig:varn_1vs99}.

We found the following relation between $\Var LC_n$ and $n$ using
linear regression
\begin{equation*}
  \Var LC_n\approx 0.0042n^{1.0021}.
\end{equation*}

In all cases, we conjecture that the order of variance of $LC_n$ is:
$$Var LC_n \widesim[3]{asym} cn,$$
where $c$ is a small constant.

\section{Hypothesis Testing for the Similarity of two Sequences}

\subsection{Testing Procedure}
\label{sec:lcs-prob}
To test the similarity of two sequences, we propose the following hypothesis testing procedure. Assume we have two sequences $\bm{X}=(X_1,\ldots,X_n)$ and $\bm{Y}=(Y_1,\ldots,Y_n)$, both of length $n$, and then define the null and alternative hypothesis as
\begin{align*}
&H_0: \bm{X}\text{ and } \bm{Y} \text{ are i.i.d. uniformly generated}\\
&H_a: \bm{X}\text{ and }\bm{Y} \text{ have high similarity.}
\end{align*}
Based on the results of \cite{houdre_central_2017}, we use the Z-test and the test statistic is 
\begin{equation}
\label{eq:p}
S=\frac{(LC_n)_{obs}-\mathbb{E}LC_n}{\sqrt{\Var LC_n}},
\end{equation}
where $(LC_n)_{obs}$ is the observed length of the LCS of the two sequences being tested, while $\mathbb{E}LC_n$ and $\Var LC_n$ are the expectation and variance of the length of the LCSs of two sequences, their values estimated by Monte-Carlo simulation.

The paper \cite{reich_statistical_1984} proposed a similarity score based on LCS for comparing two sequences without providing a hypothesis testing procedure, where the estimated LCS statistics were computed for $n$ up to $1000$. Below, we develop a hypothesis testing approach and conduct simulations for $n=10,000$ and extensively verified the effectiveness of the testing method on synthetic sequences. 

\subsection{Experimental Verification}
\label{sec:lcs-alg}

\label{sec:lcs-expd}

We conducted several experiments to verify the effectiveness of our testing procedure still using the WMMM algorithm. These
experiments shares the following assumptions/parameters:
\begin{itemize}
\item The alphabet size is 4 ($|\mathcal{A}|=4$);
\item The two sequences $\bm{X}$ and $\bm{Y}$ have the same 
  length ($|\bm{X}|=|\bm{Y}|=n$);
\item The action of inserting a sequence $\bm{Z}$ into another
  sequence $\bm{X}$ is controlled by a parameter $s$. We divide $\bm{Z}$
  into $s$ equally long contiguous segments and $\bm{X}$ into $s+1$ equally long contiguous
  segments, and then insert the $s$ segments from $\bm{Z}$ into
  corresponding positions in the $s$ gaps of $\bm{X}$, as illustrated
  in Figure~\ref{fig:insert}. We denote this action as 
  $\textsc{insert}(\bm{Z},\bm{X},s)$.
\end{itemize}
\begin{figure}[!htp]
  \centering
  \includegraphics[width=0.4\textwidth]{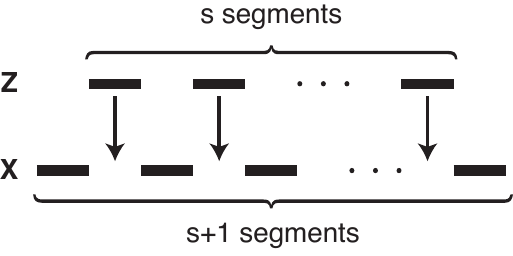}
  \caption{Inserting $\bm{Z}$ into $\bm{X}$.}
  \label{fig:insert}
\end{figure}

With $n=1,000,000$, we randomly generated 529 pairs of 
$\bm{X}$ and $\bm{Y}$, and compute 
$\gamma_4^*\approx \overline{\LCS(X,Y)}/n\approx 0.654$, 
$c\approx s^2(\LCS(X,Y))/n\approx 0.0075$.

\label{sec:lcs-expr}

We use $\alpha=0.05$, $n=10,000$ in our experiments.
For each Monte-Carlo simulation, we draw $10,000$ random samples.

Below are the experiment results.

\subsubsection{Null Hypothesis}
Here $\mathbb{P}(S\le Z_\alpha)=0.9893$, and the histogram of 
$((LC_n)_{obs}-\gamma_4^* n)/\sqrt{cn}$ is in Figure~\ref{fig:hist0}.
\begin{figure}[!htp]
  \centering
  \includegraphics[width=0.6\textwidth]{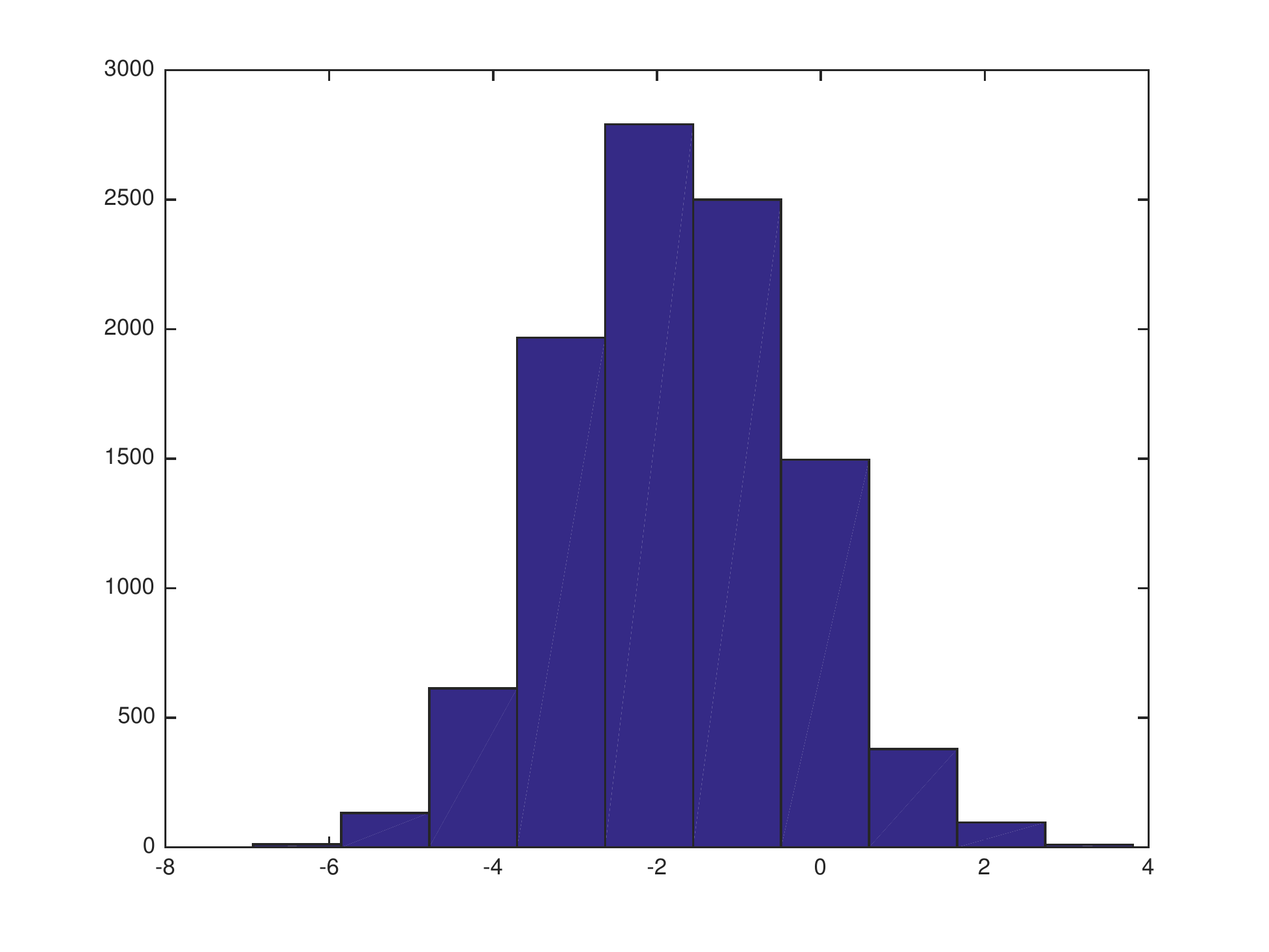}
  \caption{Histogram of $\frac{(LC_n)_{obs}-\gamma_4^* n}{\sqrt{cn}}$}
  \label{fig:hist0}
\end{figure}

\subsubsection{Alternative Hypothesis (1)}
$H_a$: We randomly generated two uniform  i.i.d.~sequences $\bm{X}'$, $\bm{Y}'$
of length $m$, and insert a sequence $\bm{Z}$ of length $n-m$ into
$\bm{X}'$ and $\bm{Y}'$, obtaining $\bm{X}$ and $\bm{Y}$. The results for $p=\mathbb{P}(S\le Z_\alpha)$ are in the following table
\begin{center}
  \begin{tabular}{ccc}
    \toprule
    $m$&$n-m$&$p$\\
    \midrule
    9,000&1,000&0\\
    9,300&700&0.2284\\
    9,350&650&0.4286\\
    9,400&600&0.6119\\
    9,500&500&0.8541\\
    9,900&100&0.9884\\
    \bottomrule
  \end{tabular}
\end{center}

\subsubsection{Alternative Hypothesis (2)}
 $H_a$: We randomly generated two uniform i.i.d.~sequences $\bm{X}'$, $\bm{Y}'$
of length $m=5,000$, and inserted a sequence $\bm{Z}$ of length $n-m=5,000$ into
$\bm{X}'$ and $\bm{Y}'$ obtaining $\bm{X}$ and $\bm{Y}$. The difference is now that each piece of the sequence $\bm{Z}$ has been inserted, with probability 0.8 into both
$\bm{X}'$ and $\bm{Y}'$, with probability 0.1 into $\bm{X}'$ alone, and with probability 0.1 into $\bm{Y}'$ alone. 

In this case, $\mathbb{P}(S\le Z_\alpha) = 0$, and the histogram of 
$((LC_n)_{obs}-\gamma_4^* n)/\sqrt{cn}$ is in Figure~\ref{fig:histar}.
\begin{figure}[!htp]
  \centering
  \includegraphics[width=0.6\textwidth]{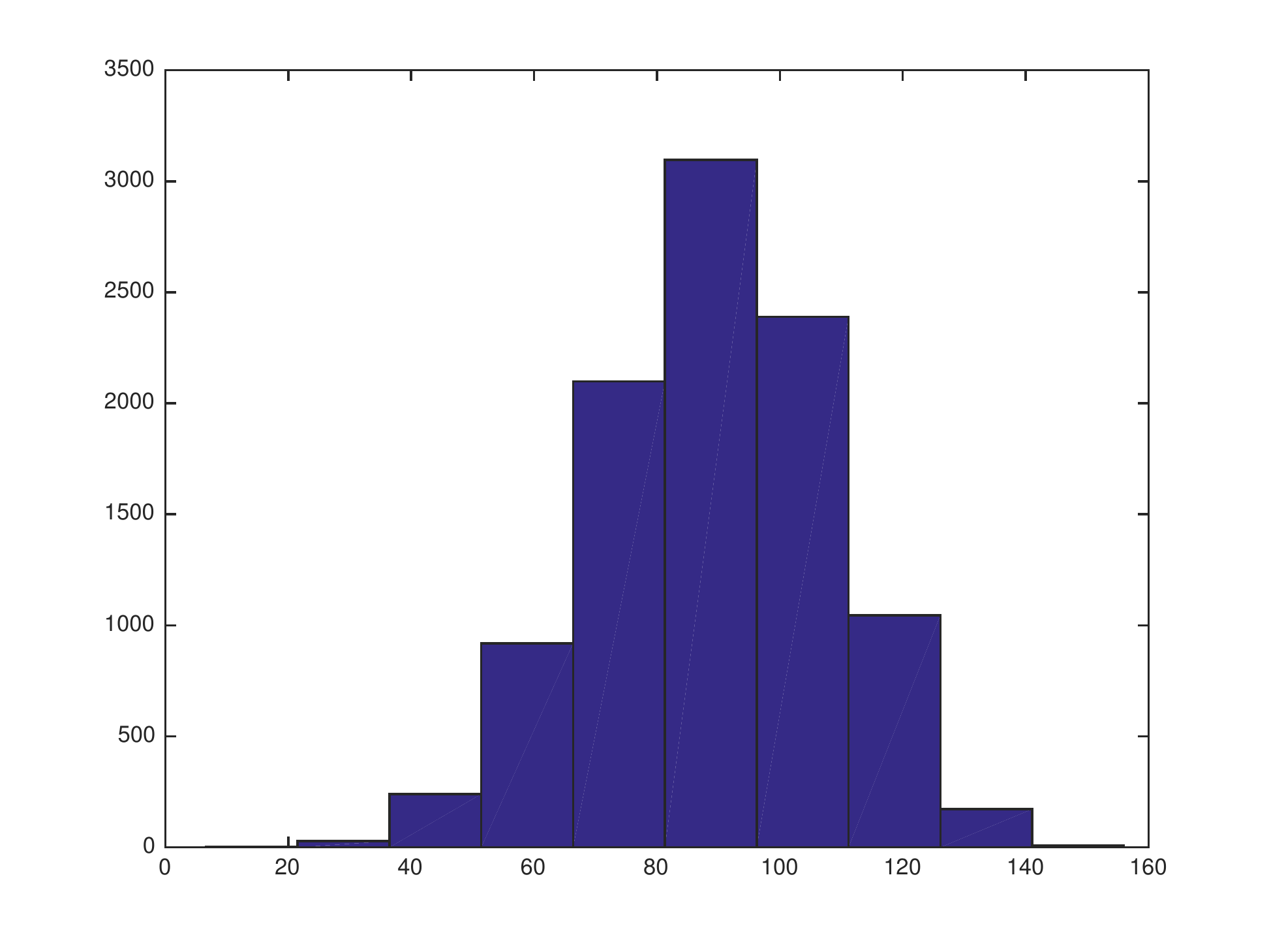}
  \caption{Histogram of $\frac{(LC_n)_{obs}-\gamma_4^* n}{\sqrt{cn}}$}
  \label{fig:histar}
\end{figure}

\subsubsection{Alternative Hypothesis (3)}

 $H_a$: We randomly generated two uniform i.i.d.~sequences $\bm{X}'$, $\bm{Y}'$
of length $m=5,000$, and insert a sequence $\bm{Z}$ of length $n-m=5,000$ into
$\bm{X}'$ and $\bm{Y}'$ obtaining $\bm{X}$ and $\bm{Y}$. This time, each piece of the sequence $\bm{Z}$ was inserted with probability 0.15 into both
$\bm{X}'$ and $\bm{Y}'$, with 
 probability 0.4 into $\bm{X}'$ alone, with  probability 0.4 into $\bm{Y}'$ alone,
and with probability 0.05 into neither $\bm{X}'$ nor $\bm{Y}'$.
\begin{figure}[!htp]
  \centering
  \includegraphics[width=0.6\textwidth]{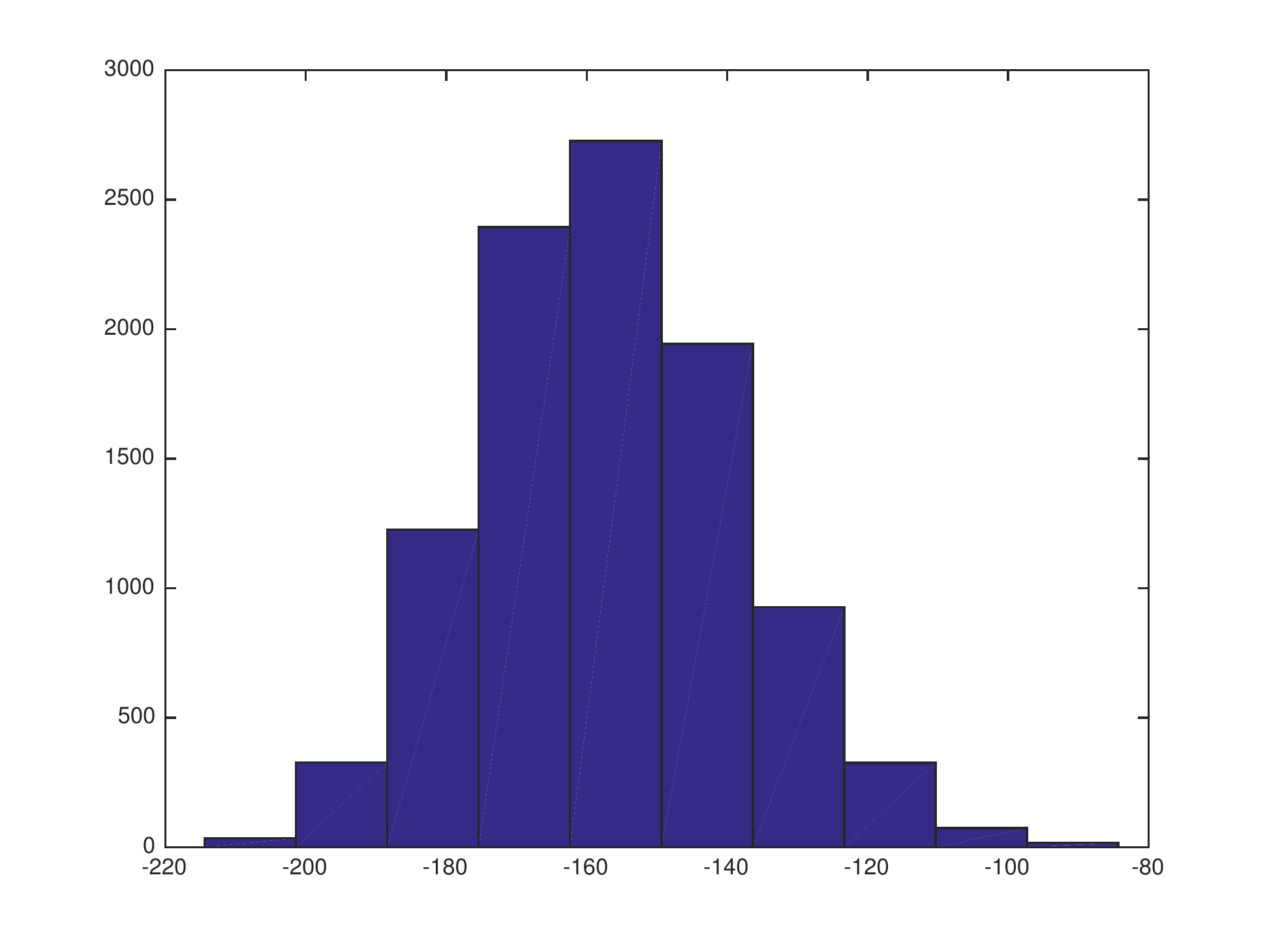}
  \caption{Histogram of $\frac{(LC_n)_{obs}-\gamma_4^* n}{\sqrt{cn}}$}
  \label{fig:histar2}
\end{figure}

In this case, $\mathbb{P}(S\le Z_\alpha) = 1$, and the histogram of 
$((LC_n)_{obs}-\gamma_4^* n)/\sqrt{cn}$ is in Figure~\ref{fig:histar2}.

The experiments show that our proposed testing procedure is effective in that the probability $\mathbb{P}(S\le Z_\alpha)$ gets closer to zero when the two sequences have higher similarity.

\section{Upper Bound on the Expected Length of LCSs for Multiple Sequences}
\label{sec:upper}
For two sequences and equally likely letters from $\mathcal{A} = \{0, 1, \cdots, k-1\}$, upper bounds on $\gamma_k^*$ are given in \cite{chvatal_upper-bound_1983},   a result which can be extended to an arbitrarily finite number of sequences. 
Below, following \cite{chvatal_upper-bound_1983}, we outline the proof of this extension which will provide upper bounds on $\gamma_{k,m}^{*}$, where $m$ now denotes the number of sequences.

Let $F(n,\bm{s},k)$ be the number of sequences of length $n$ that contains $\bm{s}$, where $\bm{s}$ is any fixed sequence of length $\ell$.  Then a counting and inductive argument developed in   \cite{chvatal_upper-bound_1983} gives:
\begin{lemma}
\label{lem:1}
\begin{equation}
F(n,\bm{s},k) = \sum_{j=\ell}^n \binom{n}{j} (k-1)^{n-j}. 
\label{eq:lem1}
\end{equation}
\end{lemma}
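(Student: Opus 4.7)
The plan is to prove the claim by induction on $n$, after first establishing via a conditioning argument that $F(n,\bm{s},k)$ depends on $\bm{s}$ only through its length $\ell = |\bm{s}|$.

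I would begin by deriving a recurrence. For a sequence $\bm{t}=(t_1,\ldots,t_n)$, I condition on the value of $t_1$. If $t_1 = s_1$, then $\bm{t}$ contains $\bm{s}$ as a subsequence if and only if $(t_2,\ldots,t_n)$ contains $\bm{s}^{(2:)} := (s_2,\ldots,s_\ell)$: one direction is immediate upon using $t_1$ as the first match, while the other follows because any embedding of $\bm{s}$ into $\bm{t}$ can always be shifted so that its leftmost index equals $1$ whenever $t_1 = s_1$, thereby reducing to an embedding of the suffix into $(t_2,\ldots,t_n)$. If instead $t_1 \neq s_1$ (which leaves $k-1$ choices for $t_1$), then any embedding of $\bm{s}$ into $\bm{t}$ must start at position $\ge 2$, so $(t_2,\ldots,t_n)$ itself must contain $\bm{s}$. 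These two disjoint cases yield the recurrence
\begin{equation*}
F(n,\bm{s},k) \;=\; F(n-1,\bm{s}^{(2:)},k) + (k-1)\,F(n-1,\bm{s},k),
\end{equation*}
and together with the trivial base case $F(0,\bm{s},k)=\mathbf{1}_{\{\ell=0\}}$, a straightforward induction on $n$ shows that $F(n,\bm{s},k)$ depends only on $(n,\ell,k)$; I denote it $F(n,\ell,k)$.

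Next, I verify the two base cases of the closed form. When $\ell = 0$, every sequence contains the empty string, so $F(n,0,k) = k^n$, which agrees with the binomial theorem identity $\sum_{j=0}^n\binom{n}{j}(k-1)^{n-j} = k^n$. When $n = \ell$, the only length-$\ell$ sequence containing $\bm{s}$ is $\bm{s}$ itself, so $F(\ell,\ell,k) = 1 = \binom{\ell}{\ell}(k-1)^0$. The inductive step then reduces to an algebraic identity: applying Pascal's rule $\binom{n}{j} = \binom{n-1}{j}+\binom{n-1}{j-1}$ to each term and splitting the sum gives
\begin{equation*}
\sum_{j=\ell}^n \binom{n}{j}(k-1)^{n-j} = (k-1)\sum_{j=\ell}^{n-1}\binom{n-1}{j}(k-1)^{n-1-j} + \sum_{j=\ell-1}^{n-1}\binom{n-1}{j}(k-1)^{n-1-j},
\end{equation*}
after a reindexing $j \mapsto j-1$ in the second piece and the observation that $\binom{n-1}{n}=0$ trims the first piece. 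By the inductive hypothesis this is exactly $(k-1)\,F(n-1,\ell,k) + F(n-1,\ell-1,k)$, matching the recurrence above.

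The only real obstacle is the conditioning step: one must justify carefully that, when $t_1 = s_1$, the event ``$\bm{t}$ contains $\bm{s}$'' is literally equivalent to ``$(t_2,\ldots,t_n)$ contains $\bm{s}^{(2:)}$,'' so that partitioning by $t_1$ produces a clean disjoint decomposition with no double-counting against the $t_1 \neq s_1$ case. This amounts to the simple remark that a leftmost (greedy) embedding may always be chosen to begin at position $1$ in that case. Once this is settled, the remainder is routine binomial manipulation.
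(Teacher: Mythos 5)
Your proof is correct, and it is essentially the ``counting and inductive argument'' that the paper itself does not spell out but attributes to Chv\'atal and Sankoff: condition on the first letter to get the recurrence $F(n,\bm{s},k)=F(n-1,\bm{s}^{(2:)},k)+(k-1)F(n-1,\bm{s},k)$, note that the count depends on $\bm{s}$ only through $\ell$, and verify the closed form by Pascal's rule. The leftmost-embedding remark you flag is exactly the point that makes the case split clean, and your treatment of it is adequate.
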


Since
$${\binom{n}{j +1 }}(k-1)^{n-j-1} \le {\binom{n}{j} }(k-1)^{n-j}, \text{ for } j \ge n/k, $$
\eqref{eq:lem1} leads to
\begin{equation}
\label{neq:F_n_s_k}
F(n,\bm{s},k)  \le n {\binom{n}{\ell} }(k-1)^{n-\ell}, \text{ for }  \ell \ge n/k
\end{equation}

For a fixed $\bm{s}$ of length $\ell$, the number of ordered $m$-tuples of length-$n$ sequences $(\bm{a}_1, \bm{a}_2,\cdots,  \bm{a}_m)$ that all contains $\bm{s}$ as a subsequence is $F^{m}(n,\bm{s},k)$. Then the total number of such $(m+1)$-tuples $(\bm{a}_1, \bm{a}_2,\cdots,  \bm{a}_m, \bm{s})$ is 
\begin{equation*}
G(n,\ell,k) = \sum_{ |\bm{s}|=\ell} F^{m}(n,\bm{s},k),  
\end{equation*}
where the summation is over all the $k^{\ell}$ sequences of length $\ell$. 

Now, let $g(n,\ell,k)$ be the number of $m$-tuples $(\bm{a}_1, \bm{a}_2,\cdots,  \bm{a}_m)$ such that $LC( \bm{a}_1, \bm{a}_2, \cdots, \bm{a}_m )  \ge \ell$, then 
\begin{equation}
\label{neq:g_G}
g(n,\ell,k) \le G(n,\ell,k).
\end{equation}

Next, let $h_k^{(n)}(\theta) $ be the proportion of all ordered $(\bm{a}_1, \bm{a}_2,\cdots, \bm{a}_m)$ such that $LC( \bm{a}_1, \bm{a}_2,\cdots,  \bm{a}_m )  \ge \ell$. 

\begin{lemma}
Let $\theta = \ell/n$, then
$h_k^{(n)} \le (H_k(\theta))^{mn}$, where 
$$H_k(\theta) = \frac{k^{(\theta/m) -1} (k-1)^{1-\theta}}{\theta^{\theta} (1-\theta)^{1-\theta}}.$$
Moreover,  $H_k(\theta)=1$ has a unique solution in the interval $[1/k, 1) $. Let $V_k$ be this solution, then $H_k(\theta)<1$, for $\theta > V_k$.
\end{lemma}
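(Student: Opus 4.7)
The plan is to combine the counting and entropy estimates already in place with a one-variable concavity analysis of $\log H_k$ on $[1/k, 1)$.

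For the upper bound on $h_k^{(n)}$, I would start from the identity $h_k^{(n)}(\theta) = g(n,\ell,k)/k^{mn}$, since $k^{mn}$ is the total number of ordered $m$-tuples of length-$n$ sequences over $\mathcal{A}$. Chaining \eqref{neq:g_G} with the definition of $G(n,\ell,k)$ and the bound \eqref{neq:F_n_s_k} (applicable because $\theta \ge 1/k$) gives $g(n,\ell,k) \le G(n,\ell,k) \le k^{\ell}\bigl(n\binom{n}{\ell}(k-1)^{n-\ell}\bigr)^{m}$. I would then apply the standard entropy estimate $\binom{n}{\ell} \le \theta^{-n\theta}(1-\theta)^{-n(1-\theta)}$ and regroup the exponents: the factor $k^{\ell}/k^{mn}$ becomes $k^{n\theta-mn} = k^{mn(\theta/m - 1)}$, the factor $(k-1)^{m(n-\ell)}$ becomes $(k-1)^{mn(1-\theta)}$, and the binomial term contributes $(\theta^{\theta}(1-\theta)^{1-\theta})^{-mn}$. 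Assembling these yields $h_k^{(n)}(\theta)\le n^{m}(H_k(\theta))^{mn}$; the polynomial prefactor $n^{m}$ is subexponential in $mn$ and can either be absorbed into a slight perturbation of $\theta$ or deferred to the application, leaving the clean bound of the statement.

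For the uniqueness of $V_k$, I would set $\varphi(\theta) := \log H_k(\theta) = (\theta/m - 1)\log k + (1-\theta)\log(k-1) - \theta\log\theta - (1-\theta)\log(1-\theta)$. A direct differentiation gives $\varphi'(\theta) = (\log k)/m - \log(k-1) + \log\bigl((1-\theta)/\theta\bigr)$ and $\varphi''(\theta) = -1/(\theta(1-\theta)) < 0$, so $\varphi$ is strictly concave on $(0,1)$. The boundary evaluations $\varphi(1/k) = (\log k)/(km) > 0$ and $\varphi'(1/k) = (\log k)/m > 0$ place the unique interior maximum $\theta^{*}$ of $\varphi$ strictly to the right of $1/k$; at the other endpoint, $\varphi(1) = ((1-m)/m)\log k \le 0$, with equality only in the trivial case $m=1$. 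By strict concavity, $\varphi$ is strictly decreasing on $[\theta^{*},1)$, and the intermediate value theorem then produces a unique $V_k \in [\theta^{*},1) \subset [1/k,1)$ with $\varphi(V_k)=0$, i.e.\ $H_k(V_k)=1$; strict monotonicity gives $H_k(\theta) < 1$ for every $\theta > V_k$.

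The main obstacle, such as it is, lies in the first step: the exponent bookkeeping is elementary but must be done carefully so that the power of $k$ emerges with the exact exponent $mn(\theta/m -1)$ matching the numerator of $H_k$. A secondary concern is the polynomial prefactor $n^{m}$ inherited from \eqref{neq:F_n_s_k}; since the lemma is used downstream only through the exponential decay of $(H_k(\theta))^{mn}$ past $V_k$, this factor is harmless for the intended upper bound on $\gamma_{k,m}^{*}$, but a fully careful restatement should record it.
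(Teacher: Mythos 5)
Your proof is correct and follows essentially the same route as the paper: the identical counting chain $h_k^{(n)} = g(n,\ell,k)/k^{mn} \le G(n,\ell,k)/k^{mn} \le k^{\ell-mn}\bigl(n\binom{n}{\ell}(k-1)^{n-\ell}\bigr)^{m}$ for the first part, and a sign analysis of the logarithmic derivative of $H_k$ for the second. The only divergences are that you use the non-asymptotic entropy bound on $\binom{n}{\ell}$ and retain the (harmless, correctly flagged) prefactor $n^{m}$, whereas the paper invokes Stirling and in fact only establishes the limiting statement $\lim_{n\to\infty}(h_k^{(n)})^{1/n} \le H_k(\theta)^{m}$ rather than the finite-$n$ inequality as literally stated, and that your concavity computation $\varphi''(\theta) = -1/(\theta(1-\theta)) < 0$ quietly corrects the paper's swapped sign cases for $dH_k/H_k$ (which should be positive for $\theta < \theta_k$ and negative for $\theta > \theta_k$) --- on both points your write-up is the more careful one.
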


\begin{proof} By Lemma 1 as well as \eqref{neq:F_n_s_k} and \eqref{neq:g_G}, 
\begin{align*}
h_k^{(n)} & = \frac{g(n,\ell,k)}{k^{mn}} \le  \frac{G(n,\ell,k)}{k^{mn}}  
 = \sum_{|\bm{s} | = \ell} \frac{F^m(n,\bm{s},k)}{k^{mn}}   
 \le k^{\ell-mn}  \left\{ n \binom{n}{\ell }(k-1)^{n-\ell} \right\}^m.
\end{align*}
Thus by Stirling's formula,  , 
\begin{align*}
\lim_{n \to \infty} (h_k^{(n)} )^{1/n} 
& \le \lim_{n\to \infty} k^{(\ell-mn)/n}   \left\{ n \binom{n}{\ell }(k-1)^{n-\ell} \right\}^{m/n} \\
& =k^{\theta - m} (k-1)^{m - m \theta}  \lim_{ n \to \infty}   \left\{ n \binom{n}{\ell }\right\}^{m/n}\\
&= k^{\theta - m} (k-1)^{m - m \theta} \frac{1}{\theta^{m\theta}(1-\theta)^{m-m\theta}}\\
&= H_k(\theta)^m.
\end{align*}
To prove the second statement of the lemma, note that   $H_k(\theta) > 0$ for all $\theta \in [1/k, 1)$ and that
$$\lim_{\theta\to 1} H_k(\theta)  = k^{1/m - 1} \lim_{\theta\to 1} \frac{ (k-1)^{1-\theta}}{\theta^{\theta} (1-\theta)^{1-\theta}} = k^{-(m-1)/m}<1,$$
while
$$H_k(1/k) = k^{1/mk} >1.$$
But for $\theta \in [1/k, 1)$, 
$$\frac{d H_k(\theta)}{H_k(\theta)} = \log \frac{(1-\theta)k^{1/m}}{(k-1)\theta}
= \begin{cases}
> 0 &\text { if } \theta > \theta_k \\
<0 & \text{ if } \theta < \theta_k, 
\end{cases}$$
for some $\theta_k$. Therefore, there exists a unique solution $V_k \in [1/k, 1)$, and $H_k(\theta) < 1 $  for $\theta > V_k$.
\end{proof}

Combining the above results leads to:
\begin{proposition}
$$\lim_{n \to \infty} \frac{\mathbb{E}LC_n}{n} \le V_k.$$
\end{proposition}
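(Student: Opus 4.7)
The plan is to convert the exponential tail bound from the previous lemma into a bound on the expectation via the standard trick of splitting on the event $\{LC_n > \theta n\}$. Recall that $LC_n \le n$ almost surely, so for any $\theta \in (V_k,1)$,
\begin{equation*}
\mathbb{E}LC_n \;=\; \mathbb{E}\bigl[LC_n\,\mathbf{1}_{LC_n \le \theta n}\bigr] + \mathbb{E}\bigl[LC_n\,\mathbf{1}_{LC_n > \theta n}\bigr] \;\le\; \theta n \;+\; n\,\mathbb{P}(LC_n > \theta n).
\end{equation*}
Dividing by $n$ reduces the task to showing that the tail probability $\mathbb{P}(LC_n > \theta n)$ vanishes as $n\to\infty$ whenever $\theta > V_k$.

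For the tail, I would take $\ell = \lceil \theta n\rceil$, so that $\ell/n \to \theta$, and apply the previous lemma. Since $h_k^{(n)}(\theta) = \mathbb{P}(LC_n \ge \ell)$ by definition (it is the proportion of $m$-tuples of length-$n$ sequences whose LCS is at least $\ell$, which under uniform i.i.d.\ sampling is exactly this probability), the lemma yields
\begin{equation*}
\limsup_{n\to\infty}\bigl(\mathbb{P}(LC_n \ge \lceil \theta n\rceil)\bigr)^{1/n} \;\le\; H_k(\theta)^{m}.
\end{equation*}
Because $\theta > V_k$ implies $H_k(\theta) < 1$, the tail probability decays geometrically and in particular goes to zero.

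Combining the two steps, for every $\theta > V_k$,
\begin{equation*}
\limsup_{n\to\infty}\frac{\mathbb{E}LC_n}{n} \;\le\; \theta + \limsup_{n\to\infty}\mathbb{P}(LC_n > \theta n) \;=\; \theta.
\end{equation*}
Letting $\theta \downarrow V_k$ gives $\limsup_{n\to\infty} \mathbb{E}LC_n/n \le V_k$. Since the limit $\gamma_{k,m}^{*} = \lim_{n\to\infty} \mathbb{E}LC_n / n$ exists (by the same subadditivity/Fekete argument of Chv\'atal and Sankoff, which extends verbatim from $m=2$ to general $m$), the limsup coincides with the limit and the proposition follows.

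The only mildly delicate point is the identification of $h_k^{(n)}(\theta)$ with the actual probability $\mathbb{P}(LC_n \ge \ell)$ at the integer value $\ell = \lceil \theta n\rceil$, and ensuring that $\ell \ge n/k$ so that the inequality \eqref{neq:F_n_s_k} used in the proof of the lemma applies; this is automatic once one restricts attention to $\theta \ge V_k \ge 1/k$. Everything else is a routine tail-to-expectation estimate, so I do not expect a real obstacle.
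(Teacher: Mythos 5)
Your proposal is correct and follows essentially the same route as the paper: split $\mathbb{E}LC_n$ on the event $\{LC_n > \theta n\}$, bound the tail term by $n$ times $h_k^{(n)}(\theta)$, invoke the lemma's bound $h_k^{(n)} \le H_k(\theta)^{mn} \to 0$ for $\theta > V_k$, and let $\theta \downarrow V_k$. The only differences are cosmetic (you use the limsup form of the lemma's estimate and explicitly note that the limit exists by subadditivity, while the paper uses the pointwise bound $H_k^{mn}$ and writes $\epsilon$ for $\theta - V_k$), so no further comment is needed.
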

\begin{proof}
For any $\epsilon > 0$ satisfying $V_k + \epsilon < 1$, separate the total $k^{mn}$ tuples of $(\bm{a_1}, \bm{a_2},\cdots, \bm{a}_m)$ into two categories:  those with longest common subsequences longer than $(V_k +\epsilon)n$, and those with longest common subsequences with length at most $(V_k +\epsilon)n$.  Thus, 
\begin{align*}
\mathbb{E} LC_n & \le (V_k + \epsilon )n \left\{ 1- h_k^{(n)} (V_k + \epsilon )\right\} + (V_k + \epsilon )n  \left\{  h_k^{(n)} (V_k + \epsilon )\right\}  \\
&  \le (V_k + \epsilon )n  + (V_k + \epsilon )n  \left\{  h_k^{(n)} (V_k + \epsilon )\right\}  \\
& \le (V_k + \epsilon)n+ (V_k + \epsilon )n   H_k^{mn}(V_k + \epsilon). 
\end{align*} 
Since $H_k(\theta) < 1 $  for $\theta > V_k$, the last term converges to 0 as $n \to \infty$. 
Thus, 
$$\lim_{ n \to \infty} \frac{\mathbb{E} LC_n}{n} \le V_k + \epsilon, $$
holds for any $\epsilon $ satisfying  $V_k + \epsilon < 1$. 
\end{proof}

Therefore, from the above proposition, $V_k \in [1/k, 1)$ such that $H_k(V_k) = 1$ provides an upper bound on $\gamma_{k,m}^{*}$. In particular, letting $k = 2$, i.e., $\mathcal{A} = \{0, 1\}$, leads to the following table for $\gamma_{2,m}^{*}$, where the lower bounds are obtained in \cite{kiwi_speculated_2009}.

\begin{center}
\begin{tabular}{r|c|c}
number of sequences $m$ & upper bound for $\gamma_{2,m}^{*}$ & lower bound for $\gamma_{2,m}^{*}$ \\
\hline
2 & 0.866595&	0.781281\\
\hline
3 & 0.793026&	0.704473\\
\hline
4 & 0.749082&	0.661274\\
\hline
5 & 0.719527&	0.636022\\
\hline
6 & 0.698053&	0.617761\\
\hline
7 & 0.681605&	0.602493\\
\hline
8 & 0.668516&	0.594016\\
\hline
9 & 0.657797&	0.587900\\
\hline
10& 0.648819&	0.570155\\
\hline
\end{tabular}
\end{center}

The results of \cite{chvatal_upper-bound_1983} have been improved in \cite{deken_probabilistic_1983}.
The current multi-sequence result can similarly be improved using the approach there. In particular, this gives 
for three sequences with binary alphabet, the upper bound 0.791, which is slightly better than $0.793026$ obtained above.
However for four (or more) sequences, even with an alphabet of size 2, this approach becomes rather cumbersome.
Simulation results on $\mathbb{E} LC_n$ are also presented, in some multisequence cases, in \cite{ning_systematic_2013}.

\section{Acknowledgement}
We would like to thank the Partnership for an Advanced Computing Environment (PACE) for providing a high performance infrastructure to simulate and analyze our experimental results, as well as Karim Lounici for his early input and numerous comments on this paper.

\bibliographystyle{plain}
\bibliography{lcs}

\end{document}